\documentclass[10pt]{amsart}
\usepackage{amscd,amssymb,hyperref}
\usepackage[PostScript=dvips]{diagrams}
\usepackage[dvips]{graphicx}

\usepackage{graphics}
\usepackage{epsfig}
\usepackage{picinpar}
\usepackage{wrapfig}
\usepackage{amsmath}
\usepackage{color}
\usepackage{pstricks,pst-node,pst-text,pst-3d}
\usepackage{pst-plot}
\usepackage{verbatim}

\newtheorem{thm}{Theorem}[section]
\newtheorem{lem}[thm]{Lemma}

\newtheorem{prop}[thm]{Proposition}

\def\square{\vbox{
      \hrule height 0.4pt
      \hbox{\vrule width 0.4pt height 5.5pt \kern 5.5pt \vrule width 0.4pt}
      \hrule height 0.4pt}}

\def\Ker{\mathrm{K er}}
\def\ch\mathrm{c h}

\newcommand{\Z}{\mathbb{Z}}

\newcommand{\C}{\ensuremath{\mathbb{C}}}
\newcommand{\R}{\ensuremath{\mathbb{R}}}

\newcommand{\calB}{\mathcal{B}}
\newcommand{\Brun}{\mathrm{Brun}}

\numberwithin{equation}{section}

\begin{document}

\newcommand{\auths}[1]{\textrm{#1},}
\newcommand{\artTitle}[1]{\textsl{#1},}
\newcommand{\jTitle}[1]{\textrm{#1}}
\newcommand{\Vol}[1]{\textbf{#1}}
\newcommand{\Year}[1]{\textrm{(#1)}}
\newcommand{\Pages}[1]{\textrm{#1}}

\author{Roman Mikhailov}
\address{Steklov Mathematical Institute, Gubkina 8, 119991 Moscow, Russia and Institute for Advanced Study, Princeton, NJ, USA}
\email{romanvm@mi.ras.ru}
\urladdr{http://www.mi.ras.ru/\~{}romanvm/pub.html}

\author{Jie Wu}
\address{Department of Mathematics, National University of Singapore, 2 Science Drive 2
Singapore 117542} \email{matwuj@nus.edu.sg}
\urladdr{www.math.nus.edu.sg/\~{}matwujie}

\thanks{$^{\dag}$ Research is supported in part by the Academic Research Fund of the
National University of Singapore.}


\title{Homotopy groups as centers of finitely presented groups}

\begin{abstract}
For every finite abelian group $A$ and $n\geq 3$, we construct a
finitely presented group defined by explicit generators and
relations, such that its center is $\pi_n(\Sigma K(A,1))$.
\end{abstract}

\maketitle

\section{Introduction}
\vspace{.5cm} It is shown in \cite{MW} that all homotopy groups of
spheres and Moore spaces can be presented as centers of certain
finitely generated groups given by explicit generators and
relations. The following question rises naturally: for which space
$X$ and $n\geq 2$, can one construct a finitely presented group
$\Gamma(X)$ defined by generators and relations, such that the
center of $\Gamma(X)$ is $\pi_n(X)$? In this paper we study this
question for suspensions of classifying spaces of finite abelian
groups. For every finite abelian group $A$ and $n\geq 2$, we
construct a finitely presented group $\mathcal J_n$, such
that\footnote{For a group $G$, we denote its center by $Z(G)$.}
$Z(\mathcal J_n)\simeq \pi_{n+1}(\Sigma K(A,1))$.

The main approach of the construction of finitely generated groups
with centers given by homotopy groups, used in \cite{Wu2} and
\cite{MW} is the following. For certain simply-connected spaces
$X$ there are simplicial group models $G_*$, for loops of $X$,
i.e. $|G_*|\simeq \Omega X$ such that the centers of components of
$G_*$ are trivial and there is a combinatorial description of
Moore boundaries $\calB G_*$. In this case, the homotopy groups
$\pi_{n+1}(X)\simeq \pi_n(G_*)$ are isomorphic to the centers of
the quotient groups $G_n/\calB G_n$. However, for all such
simplicial models in \cite{Wu2} and \cite{MW}, the Moore
boundaries $\calB G_n$ are not generated by finitely many elements
as normal subgroups of $G_n$. Recall that, for the two-dimensional
sphere $S^2$, there is a trick based on properties of braid
groups, which gives a sequence of finitely presented groups given
by generators and relations such that their centers are homotopy
groups $\pi_*(S^2)\times \mathbb Z$ \cite{LW1}. However, this
trick does not work for other spaces, since it is based on very
specific properties of Milnor's simplicial construction $F[S^1]$
\cite{Milnor}. Recall also that, for a group $G$, such that the
commutator subgroup $[G,G]$ has a trivial center, the non-abelian
tensor square $G\otimes G$ in the sense of Brown-Loday \cite{BL}
has the following property:
$$
\pi_3(\Sigma K(G,1))\simeq Z(G\otimes G).
$$
However, a generalization of this construction for higher homotopy
groups seems to be a hard problem.

The homotopy groups of the suspended spaces $\Sigma K(A,1)$ are
highly nontrivial from the point of view of computations. For
example, the homotopy groups $\pi_n(\Sigma K(\mathbb Z/2,1))$ are
known at the moment for $n\leq 6$ (see \cite{MW1}, \cite{RR}).

Our construction is as follows. Let $A$ be an abelian group. For
$n\geq 1,$ define the free product $T_n:=(\underbrace{A\times
\dots \times A}_{n\ \text{copies}})*(\underbrace{A\times \dots
\times A}_{n\ \text{copies}}).$ For $i=1,\dots, n,$ denote by
$A_i$ (resp. $A_{n+i}$) the $i$th copy of $A$ in the first (resp.
second) free summand in $T_n$. For $j=1,\dots, 2n$, $a\in A$ we
denote by $a_j$ its value in $A_j$. Define
\begin{align}
& R_1=\langle a_1, a_{n+1}\ |\ a\in A\rangle^{T_n}\label{d1}\\
& R_i=\langle a_ia_{i-1}^{-1},\ a_{n+i}a_{n+i-1}^{-1}\ |\ a\in A\rangle^{T_n},\ 1<i\leq n\label{d2}\\
& R_{n+1}=\langle a_n, a_{2n}\ |\ a\in A\rangle^{T_n}\label{d3}
\end{align}
Recall the definition of the symmetric commutator subgroup:
\begin{equation}\label{symmcomm}
[R_1,R_2,\ldots,
R_{n+1}]_S=\prod_{\sigma\in\Sigma_{n+1}}[\dots[R_{\sigma(1)},R_{\sigma(2)}],\ldots,R_{\sigma(n+1)}].
\end{equation}
Define
$$
\mathcal J_n(A):=T_n/(\gamma_{2^{n+1}}([T_n,T_n])[R_1,\dots,
R_{n+1}]_S),
$$
where, for a group $G$, $\{\gamma_i(G)\}_{i\geq 1}$ is the lower
central sequence. The main result of the paper is the following:
\begin{thm}\label{theorem}
Let $A$ be a finite abelian group. The homotopy group
$\pi_{n+1}(\Sigma K(A,1))$ is isomorphic to the center of the
polycyclic group $\mathcal J_n(A)$ for all $n\geq 2$.
\end{thm}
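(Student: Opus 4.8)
The plan is to combine a simplicial group model for $\Omega\Sigma K(A,1)$ with the ``center of $G_n/\calB G_n$'' machinery of \cite{MW}, and then to superimpose the lower central truncation $\gamma_{2^{n+1}}([T_n,T_n])$ so as to convert the resulting group into a polycyclic one without disturbing the relevant quotient. First I would exhibit a simplicial group $G_\ast$ realizing $\Omega\Sigma K(A,1)$ whose group of $n$-simplices is exactly $T_n=A^n\ast A^n$, the two free factors reflecting the two cones of the reduced suspension and the internal product $A^n$ reflecting the bar resolution of $K(A,1)$. The faces should act as the bar-construction faces on each free factor: $d_0$ deletes the first coordinate, $d_j$ (for $0<j<n$) multiplies the $j$th and $(j+1)$st coordinates, and $d_n$ deletes the last coordinate. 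A direct computation then gives $\ker d_{i-1}=R_i$ for $i=1,\dots,n+1$, matching \eqref{d1}--\eqref{d3}, so that $T_n=R_1R_2\cdots R_{n+1}$.

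With this model, $\pi_n(G_\ast)\cong\pi_{n+1}(\Sigma K(A,1))$, and the Moore cycles and boundaries are computed combinatorially as $Z_nG=\bigcap_{i=1}^{n+1}R_i$ and $\calB G_n=[R_1,\dots,R_{n+1}]_S$, the latter being the symmetric-commutator identity for Moore boundaries from \cite{MW}. Hence $\pi_{n+1}(\Sigma K(A,1))\cong(\bigcap_i R_i)/[R_1,\dots,R_{n+1}]_S$. To promote this to a statement about centers I would verify that the components of $G_\ast$ have trivial center and invoke the principle of \cite{MW}: for such a simplicial group the center of $G_n/\calB G_n$ is precisely the image of the Moore cycles. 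This yields $Z(T_n/[R_1,\dots,R_{n+1}]_S)\cong\pi_{n+1}(\Sigma K(A,1))$, settling the theorem for the (generally non-finitely-presented) group $T_n/[R_1,\dots,R_{n+1}]_S$.

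The remaining step is to pass to $\calJ_n(A)=T_n/(\gamma_{2^{n+1}}([T_n,T_n])[R_1,\dots,R_{n+1}]_S)$ and show that the extra relator produces a polycyclic group while changing neither the center nor its identification with $\pi_{n+1}$. Polycyclicity is the easy half: since $A$ is finite, $T_n^{\mathrm{ab}}=A^{2n}$ is finite and the finite-index subgroup $[T_n,T_n]/\gamma_{2^{n+1}}([T_n,T_n])$ is finitely generated nilpotent, so $T_n/\gamma_{2^{n+1}}([T_n,T_n])$ is polycyclic and so is its quotient $\calJ_n(A)$; this is also what makes $\calJ_n(A)$ finitely presented, since the image of $[R_1,\dots,R_{n+1}]_S$ in the Noetherian group $T_n/\gamma_{2^{n+1}}([T_n,T_n])$ is finitely generated. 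The hard half is the invariance of the center. Writing $Q=T_n/[R_1,\dots,R_{n+1}]_S$ and letting $M$ be the image of $\gamma_{2^{n+1}}([T_n,T_n])$ in $Q$, one has $\calJ_n(A)=Q/M$, and two things must be proved: first, that the central subgroup $\bigcap_i R_i$ survives, i.e. the crux containment
\[
\gamma_{2^{n+1}}([T_n,T_n])\cap\Big(\bigcap_{i=1}^{n+1}R_i\Big)\subseteq[R_1,\dots,R_{n+1}]_S,
\]
so that the finite group $\pi_{n+1}(\Sigma K(A,1))$ embeds in $\calJ_n(A)$; and second, that $Z(Q/M)$ is no larger than the image of $Z(Q)$.

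I expect the crux containment to be the main obstacle. The exponent $2^{n+1}$ is not cosmetic: using $T_n=R_1\cdots R_{n+1}$ together with commutator collection and the estimate $\gamma_k(\gamma_2(G))\subseteq\gamma_{2k}(G)$, one should be able to rewrite any element of $\gamma_{2^{n+1}}([T_n,T_n])$ that also lies in $\bigcap_i R_i$ as a product of $(n+1)$-fold commutators of the $R_i$, the doubling of the index reflecting the iterated commutator estimates over the $n+1$ factors; here the finiteness of $A$ is essential, guaranteeing that the relevant lower central quotients are finite so that the symmetric commutator subgroup catches the truncation. For the ``no new center'' step I would argue through the descended faces $\bar d_{i-1}\colon\calJ_n(A)\to T_{n-1}/\gamma_{2^{n+1}}([T_{n-1},T_{n-1}])$, which are well defined because every term of $[R_1,\dots,R_{n+1}]_S$ involves $R_i$ and hence dies under $d_{i-1}$, and use them to force a central element of $\calJ_n(A)$ into $\bigcap_i R_i$ modulo the relators, exactly as the triviality of the centers of the components forced cycles in the untruncated case.
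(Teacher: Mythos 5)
Your overall architecture (a simplicial group with components $T_n$, Moore boundaries given by the symmetric commutator subgroup, and the center machinery of \cite{Wu2,MW}) is indeed the paper's, but three load-bearing steps are missing or misstated. First, there is no simplicial group with $G_n=A^n\ast A^n$ realizing $\Omega\Sigma K(A,1)$: the free product $E_\ast\ast E_\ast$ realizes $\Omega(K(A,2)\vee K(A,2))$, whose $\pi_2$ is $A\oplus A$ rather than $A$; the identification $\pi_i(T_\ast)\cong\pi_{i+1}(\Sigma K(A,1))$ holds only in degrees $i\geq 2$ and requires the Hopf-fibration pullback comparison of Section \ref{section2}, which your sketch skips. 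Second, the symmetric-commutator formula for Moore boundaries that you quote from \cite{MW} is proved there (and in \cite{Wu2}) for free simplicial groups of Milnor/Carlsson type; the components of $E_\ast\ast E_\ast$ are free products of finite \emph{abelian} groups, so those results do not apply directly. The paper must lift along the simplicial epimorphism $\tilde\phi\ast\tilde\phi\colon F^{F\ast F}[S^1]\twoheadrightarrow K(A,1)\ast K(A,1)$, use the fat-commutator description of boundaries from \cite[proof of Theorem 1.8]{Wu2}, the equality of fat and symmetric commutator subgroups from \cite[Theorem 1.1]{LW2}, and Quillen's lemma that a simplicial epimorphism is an epimorphism on Moore chains \cite{Quillen}, in order to obtain $\mathcal B_n=[R_1,\ldots,R_{n+1}]_S$. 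Your one-line citation leaves this entire computation, roughly half the paper's proof, unaccounted for.

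Third, and most seriously, your treatment of the truncation inverts the paper's order and leaves exactly the two hardest points as hopes. The paper never proves your ``crux containment'' $\gamma_{2^{n+1}}([T_n,T_n])\cap\bigcap_i R_i\subseteq[R_1,\ldots,R_{n+1}]_S$ by commutator collection, nor a separate ``no new center'' step. Instead it truncates at the simplicial level: Lemma \ref{lem1} shows that $[T_\ast,T_\ast]$ is a free simplicial group (the basis of commutators $[a,b]$ is stable under degeneracies), Curtis's theorem \cite{Curtis} then gives $\pi_i([T_\ast,T_\ast])\cong\pi_i([T_\ast,T_\ast]/\gamma_{2^{n+1}}([T_\ast,T_\ast]))$ for $i<n+1$, the long exact sequence for $1\to[T_\ast,T_\ast]\to T_\ast\to T_\ast/[T_\ast,T_\ast]\to 1$ transfers this to $T_\ast$ in degrees $\geq 3$, and \cite[Proposition 2.14]{Wu2} is applied \emph{directly to the truncated simplicial group} $T_\ast/\gamma_{2^{n+1}}([T_\ast,T_\ast])$. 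That application requires the centers of the truncated components $A^n\ast A^n$ modulo $\gamma_{2^{n+1}}$ of the commutator subgroup to be trivial; this is Lemma \ref{lemma31}, a genuine computation (trivial center is easy for the free product itself, but not after killing a lower central term of $[G,G]$), and it is precisely what your vague descended-faces argument would need and lacks. Note also that your containment, insofar as it holds, is a consequence of the injectivity half of the Curtis comparison, so attempting it by bare collection amounts to re-deriving Curtis's theorem by hand; and finiteness of $A$ is not what makes it work --- the Curtis bound $i<\log_2 2^{n+1}$ holds for any connected free simplicial group --- finiteness enters only for polycyclicity (where your argument is correct and matches the paper) and for Lemma \ref{lemma31}.
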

All polycyclic groups are finitely presented, in particular,
$\mathcal J_n(A)$ is finitely presented, moreover, it follows from
the definition, that $\mathcal J_n(A)$ is virtually nilpotent. The
group $\mathcal J_n(A)$ is obtained canonically from the
simplicial group $K(A,1)\ast K(A,1)$. The lower central series
spectral sequence from the free product $K(A,1)\ast K(A,1)$ may
give some computational information on the homotopy groups and the
group $\mathcal J_n(A)$.

There is an explicit construction of finitely presented group
whose center is $\pi_n(S^2)\times \mathbb Z$ given in~\cite{LW1}.
However there are some mistakes in the paper~\cite{LW1}. In
Section \ref{section6}, we give a brief review for the main result
in~\cite{LW1} with corrections for the mistakes. The notion of
symmetric commutator (\ref{symmcomm}) plays the central role in
this construction.

\section{Simplicial models}\label{section2}
\vspace{.5cm} Let $A$ be an abelian group. The homotopy
commutative diagram of fibre sequences
\begin{equation}\label{equation3.1}
\begin{diagram}
\Sigma K(A,1)\wedge K(A,1)&\rTo^{H} &\Sigma K(A,1)&\rTo& K(A,2)=BK(A,1)\\
\dEq&&\dTo&\textrm{pull}&\dTo>{\Delta}\\
\Sigma K(A,1)\wedge K(A,1)&\rTo &K(A,2)\vee K(A,2)&\rInto& K(A,2)\times K(A,2),\\
\end{diagram}
\end{equation}
where $H$ is the Hopf fibration, implies that there are
isomorphisms
\begin{equation}\label{wq}
\pi_n(\Sigma K(A,1)) \cong \pi_n(K(A,2)\vee K(A,2))
\end{equation}
for $n\geq 3$.

Choose the simplest simplicial model for $K(A,1)$. Applying the
inverse to the normalization functor in the sense of Dold-Kan to
the complex $A[1]$, we obtain the abelian simplicial group  $E_*$
with components
$$
E_i=(\underbrace{A\times \dots \times A}_{i\ \text{copies}}),\
i\geq 1
$$
and the property $|E_*|\simeq K(A,1)$. The face and degeneracy
maps in $E_*$ are standard, their structure follows from the
construction of the inverse to the normalization functor.

The following fact follows directly from the Whitehead Theorem
\cite{Whitehead} (see \cite{Kan-Thurston}, Proposition 4.3, for
the simplicial version of the Whitehead Theorem):
\begin{lem}\label{element}
For an abelian group $A$, there is a homotopy equivalence
$$
|E_**E_*|\simeq \Omega(K(A,2)\vee K(A,2)).
$$
Here $E_**E_*$ is the free product of two copies of the simplicial
group $E_*$.
\end{lem}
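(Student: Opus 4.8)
The plan is to establish both sides of the claimed homotopy equivalence
$$
|E_* * E_*| \simeq \Omega(K(A,2)\vee K(A,2))
$$
by identifying what the free product simplicial group $E_**E_*$ models and then invoking the simplicial Whitehead theorem. First I would recall the classical fact, going back to Milnor and Kan, that for a reduced simplicial set $X_*$ with $X_0=\ast$, the geometric realization of the free simplicial group $F[X_*]$ (or more generally a loop-group model) is homotopy equivalent to $\Omega|X_*|$. The key structural observation is that the free product of simplicial groups is the loop-group functor applied to a wedge: if $G_*$ and $H_*$ are simplicial group models for $\Omega Y$ and $\Omega Z$ respectively, then $G_* * H_*$ should model $\Omega(Y\vee Z)$, because looping converts wedge (coproduct of pointed spaces) into free product at the level of the James/Milnor construction. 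So the heart of the argument is to show that $E_*$ itself is a loop-group model for $K(A,2)$, i.e. $|E_*|\simeq \Omega K(A,2)$, and then to promote this to the free product.

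Concretely, the chain of identifications I would carry out is as follows. By construction $E_*$ is obtained from the Dold-Kan inverse applied to the complex $A[1]$, so that $|E_*|\simeq K(A,1)$ as stated in the excerpt. Now $K(A,1)\simeq \Omega K(A,2)$, so $E_*$ is a simplicial group whose realization is the loop space $\Omega K(A,2)$. The next step is to verify that $E_*$ is genuinely a \emph{group} model for this loop space in the appropriate sense, i.e. that the classifying space functor $\overline{W}$ applied to $E_*$ recovers $K(A,2)$; since $E_*$ is an abelian simplicial group with $\pi_0$ trivial beyond degree matching $A[1]$, this is where I would appeal to the standard homotopy theory of simplicial abelian groups. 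Once $\overline{W}E_* \simeq K(A,2)$, the free product $E_* * E_*$ is a simplicial group, and I would compute its classifying space. The essential point is a general formula expressing $\overline{W}(G_* * H_*)$ in terms of $\overline{W}G_*$ and $\overline{W}H_*$, namely that the classifying space of a free product of simplicial groups is the wedge of the individual classifying spaces, giving $\overline{W}(E_* * E_*)\simeq K(A,2)\vee K(A,2)$.

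Having identified $\overline{W}(E_**E_*)\simeq K(A,2)\vee K(A,2)$, the conclusion $|E_**E_*|\simeq \Omega(K(A,2)\vee K(A,2))$ follows from the adjunction-type equivalence $|\overline{W}G_*|\simeq B|G_*|$ together with the fact that $\Omega B$ recovers the original (connected) group up to homotopy, i.e. $|G_*|\simeq \Omega|\overline{W}G_*|$ for any simplicial group $G_*$. This is precisely the simplicial version of the Whitehead theorem cited in the excerpt (\cite{Kan-Thurston}, Proposition 4.3): it suffices to check that the natural map induces isomorphisms on all homotopy groups, and this reduces to the two identifications already made. Applying it to $G_*=E_**E_*$ yields the desired equivalence.

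The main obstacle I anticipate is the claim that the classifying space of a free product of simplicial groups is the wedge of the classifying spaces, $\overline{W}(G_**H_*)\simeq \overline{W}G_*\vee \overline{W}H_*$. This is intuitively the statement that delooping turns free products into wedges, dual to the way looping turns wedges into free products, but it is not entirely formal: free product is the coproduct in simplicial groups, and one must check that $\overline{W}$ (or equivalently the realization followed by classifying space) sends this coproduct to the wedge up to homotopy. I would handle this by working levelwise and invoking the James splitting or the van Kampen-type behaviour of $\overline{W}$ on coproducts, being careful that the relevant spaces are connected so that the wedge is the homotopy pushout computing the coproduct of loop spaces. The remaining steps—the Dold-Kan identification of $E_*$ and the routine application of the simplicial Whitehead theorem—are standard and should present no serious difficulty.
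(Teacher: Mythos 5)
Your proposal is correct and follows essentially the same route as the paper: the paper's one-line proof invokes J.~H.~C.~Whitehead's theorem that a wedge of aspherical complexes is aspherical with fundamental group the free product---which, applied levelwise, is precisely your key step $\overline{W}(E_* * E_*)\simeq \overline{W}E_*\vee \overline{W}E_*\simeq K(A,2)\vee K(A,2)$---together with its simplicial version (Kan--Thurston, Proposition 4.3) and the standard equivalence $|G_*|\simeq \Omega|\overline{W}G_*|$. The step you flagged as the main obstacle is exactly what the cited Whitehead/Kan--Thurston results supply, so your outline fills in the details of the paper's citation rather than diverging from it.
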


Recall that a simplicial group $G_*$ is called free if each $G_i,\
i\geq 0$ is free with a given basis, and the bases are stable
under degeneracy operations. The following result is due to Curtis
\cite{Curtis}
\begin{thm}\label{curtis}
Let $G_*$ be a connected free simplicial group, then for each
$r\geq 2$, the homomorphism of simplicial groups $G\to
G/\gamma_r(G)$ induces isomorphisms
$$
\pi_i(G)\simeq \pi_i(G/\gamma_r(G))
$$
for all $i<log_2r$.
\end{thm}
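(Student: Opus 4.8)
The plan is to deduce the theorem from a single connectivity estimate on the lower central series layers, using that a surjection of simplicial groups is a Kan fibration. For each $r\ge 2$ the short exact sequence of simplicial groups
\[
1\to \gamma_r(G)\to G\to G/\gamma_r(G)\to 1
\]
is a fibration sequence, so the map $\pi_i(G)\to\pi_i(G/\gamma_r(G))$ is an isomorphism as soon as the fibre $\gamma_r(G)$ is $i$-connected, i.e. $\pi_j(\gamma_r(G))=0$ for all $j\le i$ (this follows from the long exact sequence, for which $\pi_{i-1}(\gamma_r)=\pi_i(\gamma_r)=0$ already suffices). Thus the entire theorem reduces to the claim that $\gamma_r(G)$ is $i$-connected whenever $i<\log_2 r$.

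To attack this I would filter $\gamma_r(G)$ by its own tail $\gamma_r(G)\supseteq\gamma_{r+1}(G)\supseteq\cdots$, whose successive quotients are $\gamma_s(G)/\gamma_{s+1}(G)$ for $s\ge r$. Since $G_*$ is free, each $G_n$ is a free group, and by the Magnus--Witt theorem $\gamma_s(G_n)/\gamma_{s+1}(G_n)\cong L_s((G_n)^{\mathrm{ab}})$, the degree-$s$ homogeneous component of the free Lie ring on the abelianization; these isomorphisms are natural, hence compatible with faces and degeneracies, so as simplicial abelian groups $\gamma_s(G)/\gamma_{s+1}(G)\cong L_s(G^{\mathrm{ab}})$. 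Because a filtration with $i$-connected layers has $i$-connected total object in the relevant range (the filtration spectral sequence vanishes there, and $\bigcap_s\gamma_s=1$ as free groups are residually nilpotent), it suffices to prove the connectivity lemma: for a connected simplicial abelian group $M$ one has $\pi_j(L_s(M))=0$ whenever $2^j<s$. Granting the lemma, for $s\ge r$ and $j\le i<\log_2 r$ we get $2^j\le 2^i<r\le s$, so $\pi_j(L_s(G^{\mathrm{ab}}))=0$, each layer is $i$-connected, and the reduction is complete; note the worst case $j=i$, $s=r$ recovers exactly the hypothesis $i<\log_2 r$.

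For the lemma itself I would first dispose of the rational contribution: the Dynkin/PBW idempotent exhibits $L_s$ as a natural retract of the $s$-th tensor power $T^s$, and since the levelwise tensor $M\otimes\cdots\otimes M$ ($s$ factors) is $(s-1)$-connected by the Künneth theorem, $L_s(M)\otimes\mathbb{Q}$ is $(s-1)$-connected, far stronger than needed. Consequently all low-dimensional homotopy of $L_s(M)$ is $p$-torsion, and it is enough to bound the connectivity of $L_s(M)$ after reduction mod each prime $p$. Working mod $p$ I would analyze $L_s$ through the free restricted Lie algebra, decomposing its homogeneous pieces into iterated tensor, symmetric, and exterior power functors and studying the Dold--Puppe derived functors of these non-additive constituents on connected complexes. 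The governing phenomenon is that the $p$-th restriction power relates the weight-$ps'$ layer to a Frobenius twist of the weight-$s'$ layer, lowering the connectivity bound by exactly one at each $p$-fold step; iterating, the first possibly nonzero homotopy of $L_s(M)$ is forced above degree $\log_p s$, the worst case $p=2$ producing precisely $2^j<s\Rightarrow\pi_j(L_s(M))=0$. I would organize this as an induction on $s$, the inductive step amounting to the connectivity of the derived functors of the quadratic pieces $\Lambda^2$, $S^2$, and $\otimes$ applied to connected complexes.

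The hard part is exactly this connectivity lemma, and within it the fact that the bound is logarithmic rather than linear in $s$. Over $\mathbb{Z}$ the Lie power $L_s$ is not a retract of $T^s$, so $L_s(M)$ sits as a simplicial subgroup of the highly connected $T^s(M)$ whose homotopy need not inject, and the $p$-torsion derived functors of the non-additive constituents manufacture genuine low-dimensional classes. Controlling these classes and extracting the sharp factor $\log_2$ is the whole content of the theorem; by contrast the tower, the Magnus--Witt identification, and the fibration bookkeeping surrounding the lemma are routine.
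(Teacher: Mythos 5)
You should know at the outset that the paper contains no proof of this statement: it is quoted from Curtis \cite{Curtis} as a known result, so the only meaningful comparison is with Curtis's original argument. Your outline does in fact reproduce the skeleton of that argument --- the reduction through the principal fibration $1\to\gamma_r(G)\to G\to G/\gamma_r(G)\to 1$, the Magnus--Witt identification $\gamma_s(G)/\gamma_{s+1}(G)\cong L_s(G^{\mathrm{ab}})$ of the lower central series layers as Lie powers of the abelianization, and the reduction of everything to the connectivity estimate $\pi_j(L_s(M))=0$ for $2^j<s$ with $M$ connected. But the proposal proves none of the substance. The connectivity lemma \emph{is} Curtis's main technical theorem, and your paragraph about restricted Lie algebras and Frobenius twists is a heuristic for why the answer should be logarithmic, not an argument: the induction you gesture at does not get off the ground without the structural input Curtis actually supplies, namely a natural filtration of the Lie functor $L_s$ whose associated graded is a sum of composites of quadratic functors (so that $L_s$ is assembled in roughly $\log_2 s$ stages), together with the Dold--Puppe derived-functor computation that the quadratic pieces $L_2$, $\Lambda^2$, $S^2$ raise connectivity by exactly one. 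Over $\mathbb{Z}$ there is no evident way to express $L_s$ through iterated quadratic constructions, and producing that decomposition is where the work lies; your own closing paragraph concedes as much, which means the proposal is a correct reduction plus an unproven theorem.

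There is also a smaller but genuine gap in the step you call routine: passing from $i$-connected layers to $i$-connectivity of $\gamma_r(G)$ itself. The finite-stage statement $\pi_i(\gamma_r(G)/\gamma_s(G))=0$ does follow from the layers by the long exact sequence, but your parenthetical ``the filtration spectral sequence vanishes there, and $\bigcap_s\gamma_s=1$'' does not close the argument. Residual nilpotence of free groups gives only an injection $G_n\hookrightarrow\varprojlim_s G_n/\gamma_s(G_n)$ into the pronilpotent completion, not an isomorphism, and homotopy does not commute with the inverse limit of the tower $\{G/\gamma_s(G)\}_s$; concretely, a Moore cycle in $\gamma_r(G)$ that bounds modulo every $\gamma_s$ need not bound in $\gamma_r(G)$, since the bounding elements $b_s$ may fail to stabilize. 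Making the infinite filtration converge is a dedicated (if by now standard) part of Curtis's proof, and any complete write-up must include an argument here --- for instance a Mittag--Leffler-type stabilization using that the connectivity of the quotients $\gamma_r(G)/\gamma_s(G)$ tends to infinity with $s$ --- rather than an appeal to $\bigcap_s\gamma_s=1$ alone.
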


Now we consider the free product of simplicial groups
$T_*:=E_**E_*$. It has the following components
$$T_i:=(\underbrace{A\times \dots \times A}_{i\
\text{copies}})*(\underbrace{A\times \dots \times A}_{i\
\text{copies}}),\ i\geq 1.$$
\begin{lem}\label{lem1}
The simplicial group $[T_*,T_*]$ is free.
\end{lem}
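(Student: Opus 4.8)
The plan is to verify the definition of a free simplicial group directly: for each $n$ I would exhibit an explicit free basis of the group $[T_n, T_n]$ and then check that these bases are carried into one another by the degeneracy operators.

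First I would identify the level-$n$ commutator subgroup with a kernel subgroup. Since each $E_n = A^n$ is abelian, the abelianization of $T_n = E_n * E_n$ is $E_n \times E_n$, so
$$[T_n, T_n] = \ker\big(E_n * E_n \to E_n \times E_n\big),$$
the kernel of the canonical map from the free product to the direct product.

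Second, I would invoke the classical structure theorem for this kernel. Write $E_n^{(1)}, E_n^{(2)}$ for the two free factors. It is a classical fact that $\ker(G * H \to G \times H)$ is a free group whose basis is the set of commutators $[g, h] = g^{-1}h^{-1}gh$ with $g \in G \setminus \{1\}$ and $h \in H \setminus \{1\}$. (Freeness alone is immediate from the Kurosh subgroup theorem: the kernel is normal and meets every conjugate of a factor trivially, because $G$ and $H$ inject into $G \times H$, so its Kurosh decomposition has no factor terms and it is free.) Applying this to $G = E_n^{(1)}$ and $H = E_n^{(2)}$ yields the basis
$$B_n = \{[x, y] : x \in E_n^{(1)} \setminus \{1\},\ y \in E_n^{(2)} \setminus \{1\}\}$$
of $[T_n, T_n]$.

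Third, I would check degeneracy-stability. The free product is formed levelwise and functorially, so each degeneracy $s_j\colon T_n \to T_{n+1}$ is the free product of the degeneracies of $E_*$; in particular it preserves the two free factors and satisfies $s_j([x,y]) = [s_j x, s_j y]$. Degeneracy operators in a simplicial group are injective, being split by the corresponding face operator ($d_j s_j = \id$), so $s_j x \neq 1$ and $s_j y \neq 1$ whenever $x, y \neq 1$. Hence $s_j$ maps $B_n$ into $B_{n+1}$, which is precisely the stability required, and $[T_*, T_*]$ is free. The point that must be handled with care is the choice of basis: an arbitrary Nielsen--Schreier basis of $[T_n, T_n]$ has no reason to be respected by the degeneracies, so the crux is to use the manifestly natural commutator basis $B_n$, whose naturality in the factors makes the stability check automatic. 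The one nontrivial input is thus the assertion that these commutators form a free basis rather than merely a generating set; I would either cite the classical free-product computation directly or combine the Kurosh freeness argument above with a transversal count to confirm that the $[x,y]$ are a basis.
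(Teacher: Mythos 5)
Your proof is correct and follows essentially the same route as the paper: both rest on the classical fact that the commutator subgroup of a free product of abelian groups (equivalently, the kernel of $G*H\to G\times H$) is free with basis the commutators $[x,y]$, $x\neq 1$, $y\neq 1$, and then observe that this natural basis is stable under the degeneracies. Your explicit check that degeneracies preserve the free factors and are injective (being split by face maps), so that $s_j[x,y]=[s_jx,s_jy]$ is again a basis element, is exactly the verification the paper leaves implicit in the phrase ``we immediately obtain from definition of $T_*$.''
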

\begin{proof}
For abelian groups $B,C$, the commutator subgroup $[G,G]$ of the
free product $G=B*C$, is a free group with basis given by all
commutators\footnote{We use the standard notation
$[a,b]:=a^{-1}b^{-1}ab$.} $[b,c],\ 1\neq b\in B,\ 1\neq c\in C$
(see, for example, \cite{MKS}). Taking such commutators as basis
elements in $[T_*,T_*]$, we immediately obtain from definition of
$T_*$, that the bases are stable under degeneracy operations.
\end{proof}
Now Theorem \ref{curtis} and Lemma \ref{lem1} imply that, the
natural map of simplicial groups
$$
[T_*,T_*]\to [T_*,T_*]/\gamma_{2^{n+1}}([T_*,T_*])
$$
induces isomorphism of homotopy groups
$$
\pi_i([T_*,T_*])\simeq
\pi_i([T_*,T_*]/\gamma_{2^{n+1}}([T_*,T_*])),\ i<n.
$$
The short exact sequence of simplicial groups:
$$
1\to [T_*,T_*]\to T_*\to T_*/[T_*,T_*]\to 1
$$
induces the long exact sequence of homotopy groups. The homotopy
equivalence
$$
|T_*/[T_*,T_*]|\simeq K(A,1)\times K(A,1)
$$
implies that, for $i\geq 3$, there is an isomorphism
$$
\pi_i([T_*,T_*])\simeq \pi_i(T_*).
$$
Lemma \ref{element} together with isomorphisms (\ref{wq}) imply
that, for all $2<i<n$, there are isomorphisms of homotopy groups
\begin{equation}\label{miso}
\pi_i(T_*)\simeq \pi_i(T_*/\gamma_{2^{n+1}}([T_*,T_*]))\simeq
\pi_{i+1}(\Sigma K(A,1)).
\end{equation}

\vspace{.5cm}
\section{Proof of the Main Theorem}
\vspace{.5cm}
\begin{lem}\label{lemma31}
Let $A$ and $B$ be nontrivial finite abelian groups, $G=A*B$. The
center of the group $H=G/\gamma_n([G,G])$ is trivial for $n\geq
2$.
\end{lem}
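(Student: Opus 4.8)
The plan is to exploit the free‑product structure of $G$ through the extension it produces. By the structure theorem for commutator subgroups of free products \cite{MKS}, the group $K:=[G,G]$ is free, of rank $(|A|-1)(|B|-1)$, and $G/K\cong A\times B$. Since $\gamma_n(K)$ is characteristic in $K$ and $K$ is normal in $G$, I get an extension
\[
1\longrightarrow K/\gamma_n(K)\longrightarrow H\longrightarrow A\times B\longrightarrow 1
\]
of the abelian group $A\times B$ by the free nilpotent group $K/\gamma_n(K)$ of class $n-1$, and conjugation turns the associated graded Lie ring $\bigoplus_{i\geq 1}\gamma_i(K)/\gamma_{i+1}(K)$ into a graded $\mathbb{Z}[A\times B]$-module (inner automorphisms by $K$ act trivially on each layer, so the action factors through $A\times B$). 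The first task is to identify the bottom layer: a relation‑module computation on the generators $[a,b]$ with $1\neq a\in A$, $1\neq b\in B$ identifies $K^{\mathrm{ab}}=K/\gamma_2(K)$ with $I_A\otimes_{\mathbb{Z}}I_B$, where $I_A,I_B$ are the augmentation ideals of $\mathbb{Z}A,\mathbb{Z}B$, with $A$ acting on the left factor and $B$ on the right.

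Next I take $z\in Z(H)$ and show it lies in the fibre $K/\gamma_n(K)$. Since $z$ commutes with the normal subgroup $K/\gamma_2(K)$, its image $\pi(z)=(\alpha,\beta)\in A\times B$ must act trivially on $I_A\otimes I_B$. The action of $(\alpha,\beta)$ is $M_\alpha\otimes M_\beta$, where $M_\alpha,M_\beta$ are the multiplication actions on $I_A,I_B$; triviality forces each factor to be a scalar $\pm 1$, and since an element acting as the identity on the augmentation ideal of a finite abelian group must itself be trivial, I obtain $\alpha=\beta=1$, so $z\in K/\gamma_n(K)$. The one exception is $A\cong B\cong\mathbb{Z}/2$, where $K$ is infinite cyclic, $\gamma_2(K)=1$, and $H\cong G$ is the infinite dihedral group with visibly trivial centre; that case is handled directly.

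It remains to treat a central $z$ in $N:=K/\gamma_n(K)$. Commuting with all of $N$ confines $z$ to $Z(N)=\gamma_{n-1}(K)/\gamma_n(K)$, the top layer of the lower central series, and commuting in addition with the generators coming from $A$ and $B$ says exactly that $z$ is fixed by the $A\times B$-action on that layer; thus $Z(H)$ is identified with the $A\times B$-invariants of $\gamma_{n-1}(K)/\gamma_n(K)$, the degree-$(n-1)$ component of the free Lie ring on $I_A\otimes I_B$. For $n=2$ this layer is $I_A\otimes I_B$ itself and $(I_A\otimes I_B)^{A\times B}=I_A^{A}\otimes I_B=0$, since $I_A^{A}=0$ for a nontrivial finite abelian group; this settles the base case cleanly. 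The main obstacle is the corresponding vanishing in higher degree: the free‑Lie‑ring layers are subquotients of high tensor powers of $I_A\otimes I_B$, whose invariants are generally nonzero, so a crude character count over the layer does not suffice, and one must use the Lie‑bracket structure together with the splitting of the action through $A$ on one factor and $B$ on the other to rule out an invariant vector. Establishing this top‑layer vanishing is where the real content of the lemma lies.
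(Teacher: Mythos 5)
Everything you actually prove is correct, and for $n=2$ your module-theoretic argument takes a genuinely different route from the paper's: the paper writes a putative central element in the commutator basis $[a,b]$ modulo $\gamma_2([G,G])$ and derives an arithmetic contradiction among the exponents $m(a,b)$, which is exactly the coordinate form of your statement that $(I_A\otimes I_B)^{A\times B}=0$ together with your scalar argument on the quotient $A\times B$. But for $n\geq 3$ your proposal has the gap you yourself name: the whole lemma is funnelled into the vanishing of $\bigl(\gamma_{n-1}(K)/\gamma_n(K)\bigr)^{A\times B}=\bigl(L_{n-1}(I_A\otimes I_B)\bigr)^{A\times B}$, and this is never established. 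Be aware that the paper does not establish it either: its proof treats only $n=2$ and asserts that ``the proof for higher $n$ is similar.''

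The serious news is that this gap cannot be filled: the required vanishing, and with it the lemma as stated, fails for $n\geq 3$. Take $G=\mathbb{Z}/2\ast\mathbb{Z}/3$ with $A=\langle a\rangle$, $B=\langle b\rangle$, and $n=3$. Here $K=[G,G]$ is free on $u=[a,b]$ and $v=[a,b^2]$, and one checks the exact relations $u^a=u^{-1}$, $v^a=v^{-1}$, $u^b=u^{-1}v$, $v^b=u^{-1}$. Hence $a$ and $b$ act on $K^{\mathrm{ab}}\cong\mathbb{Z}^2$ through matrices of determinant $1$ (namely $(-1)^2=1$ and $(-1)(0)-(-1)(1)=1$), so they act trivially on $\gamma_2(K)/\gamma_3(K)\cong\Lambda^2(K^{\mathrm{ab}})\cong\mathbb{Z}$, which is the determinant representation; since also $[\gamma_2(K),K]\leq\gamma_3(K)$, the class of $[[a,b],[a,b^2]]$ is a nontrivial central element of $G/\gamma_3([G,G])$. (A character computation gives nonzero degree-two invariants for $A=B=\mathbb{Z}/3$ as well, so this is not an artifact of $A\not\cong B$.) Your instinct that the top-layer vanishing is ``where the real content lies'' was thus exactly right, but in the opposite sense: your reduction --- whose supporting steps ($K^{\mathrm{ab}}\cong I_A\otimes I_B$, the scalar argument forcing a central element into $K/\gamma_n(K)$ outside the infinite dihedral case, and $Z(N)=\gamma_{n-1}(K)/\gamma_n(K)$) are all sound --- converts into a disproof of the lemma for $n\geq 3$, and it pinpoints why the paper's appeal to similarity with the $n=2$ computation breaks down: the degree-one cancellation driving both your argument and the paper's has no analogue in higher degree, where determinant-one phenomena create invariants. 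Any repair (and the lemma's use on the components $T_i/\gamma_{2^{n+1}}([T_i,T_i])$ in the proof of the main theorem) would need invariants to vanish in the specific top degree $2^{n+1}-1$ actually used there, which is a different and unaddressed question.
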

\begin{proof}
The commutator subgroup $[G,G]$ is free with a basis $\{[a,b],\
1\neq a\in A,\ 1\neq b\in B\}$ (see \cite{MKS}). We prove the
statement for $n=2$, the proof for higher $n$ is similar.

Let $h\in [G,G]$ then, modulo $\gamma_2([G,G]),$ $h$ can be
uniquely written as follows
\begin{equation}\label{form}
h=\prod_{1\neq a\in A,\ 1\neq b\in B}[a,b]^{m(a,b)},\ m(a,b)\in
\mathbb Z.
\end{equation}
For $c\in A,$ we have
$$
h^c\equiv \prod_{1\neq a\in A,\ 1\neq b\in
B}[ca,b]^{m(a,b)}[c,b]^{-m(a,b)} \mod \gamma_2([G,G])
$$
and
$$
[h,c]\equiv \prod_{1\neq a\in A,\ 1\neq b\in
B}[ca,b]^{m(a,b)}[a,b]^{-m(a,b)}[c,b]^{-m(a,b)} \mod
\gamma_2([G,G])
$$
Assume that $1\neq \alpha\in Z(H).$ We can write $\alpha$ as
$\alpha=fdh.\gamma_2([G,G]),\ f\in A,\ d\in B,\ h\in [G,G]$.
Assume that $d\neq 1$. Writing $\alpha$ in the form (\ref{form})
and taking $c\in A$, we have
$$
[c,\alpha]\equiv [c,d]\prod_{1\neq a\in A,\ 1\neq b\in
B}[ca,b]^{m(a,b)}[a,b]^{-m(a,b)}[c,b]^{-m(a,b)} \mod
\gamma_2([G,G])
$$
Since $\alpha\in Z(H)$, and $ca\neq c$ for $a\neq 1,$ we have
\begin{equation}\label{iden}
m(c,d)+\sum_{1\neq a\in A}m(a,d)=1.
\end{equation}
Since we can choose arbitrary element $c\in A$, the identity
(\ref{iden}) holds for every $c\in A$ (but $d\in B$ is fixed).
Summing up over all $c\in A$, we have
$$
(1+|A|)(\sum_{1\neq a\in A}m(a,d))=|A|
$$
but this is not possible, since all coefficients $m(a,d)\in
\mathbb Z$. Therefore, $d=1$. Analogous argument shows that $f=1$
and hence $\alpha\in [G,G].\gamma_2([G,G])$. Now we present
$\alpha=h.\gamma_2([G,G])$ in the form (\ref{form}). Since
$\alpha\in Z(H)$, we have
$$
[h,c]\equiv \prod_{1\neq a\in A,\ 1\neq b\in
B}[ca,b]^{m(a,b)}[a,b]^{-m(a,b)}[c,b]^{-m(a,b)} \equiv 0 \mod
\gamma_2([G,G])
$$
for any $c\in A$. Therefore, for every $b\in B$, we have
\begin{equation}\label{rela}
m(c,b)+\sum_{1\neq a\in A}m(a,b)=0.
\end{equation}
Again, summing up over all $c\in A$, we have
$$
(1+|A|)\sum_{1\neq a\in A}m(a,b)=0
$$
and therefore, $\sum_{1\neq a\in A}m(a,b)=0$. Now (\ref{rela})
implies that $m(c,b)=0$ for all $c\in A,\ b\in B$. Therefore,
$h\in \gamma_2([G,G])$ and the statement is proved.
\end{proof}

\begin{proof}[Proof of Theorem~\ref{theorem}]
Lemma \ref{lemma31} implies that, the centers of the components of
the simplicial group $T_*/\gamma_{2^{n+1}}([T_*,T_*])$ are
trivial. Isomorphism (\ref{miso}) together with \cite[Proposition
2.14]{Wu2} imply that there is an isomorphism
$$
\pi_{n+1}(\Sigma K(A,1))\simeq
Z(T_*/\gamma_{2^{n+1}}([T_*,T_*])\mathcal B_n)
$$
where $\mathcal B_n$ is the Moore boundary. It remains to show
that the Moore boundary is given by symmetric commutator subgroup
\begin{equation}\label{equation3.4}
\mathcal B_n=[R_1, R_2,\ldots, R_{n+1}]_S.
\end{equation}
Let $F=F(A\smallsetminus \{1\})$ be the free group free generated
by all nontrivial elements of $A$\footnote{Here we use the
multiplicative notations for the elements of abelian groups.}. Let
$\phi\colon F\to A$ be the canonical quotient homomorphism, namely
$\phi$ is the (unique) group homomorphism such that $\phi(a)=a$
for $a\in A\smallsetminus\{1\}$. Then $\phi$ induces a simplicial
epimorphism\footnote{For the description of Carlsson's
construction $F^G[S^1]$, see \cite{Carl}, \cite{Wu2}.}
$$
\tilde\phi\colon F^{F}[S^1]\twoheadrightarrow F^A[S^1] \twoheadrightarrow F^A[S^1]^{\mathrm{ab}}=K(A,1).
$$
Recall that the simplicial circle $S^1$ has the elements that can be explicitly given by
$$
S^1_n=\{\ast, x_{i+1}=s_{n-1}\cdots s_{i+1}s_i\cdots s_0\sigma_1\ | \ 0\leq i\leq n-1\},
$$
where $\sigma_1$ is the nondegenerate element in $S^1_1$, and by
the definition of Carlsson's construction \cite{Carl},
$F^F[S^1]_n$ is the self free product of $F$ index by elements in
$S^1_n\smallsetminus \{*\}$. Thus
$$
F^F[S^1]_n=(F)_{x_1}\ast (F)_{x_2}\ast\cdots\ast (F)_{x_n},
$$
where $(F)_{x_i}$ is a copy of $F$ labeled by $x_i$. The epimorphism $\tilde\phi\colon F^F[S^1]_n\to K(A,1)_n$ is explicitly given by the composite
$$
(F)_{x_1}\ast (F)_{x_2}\ast\cdots\ast (F)_{x_n} \twoheadrightarrow (A)_{x_1}\ast (A)_{x_2}\ast\cdots\ast (A)_{x_n}\twoheadrightarrow (A)_{x_1}\times (A)_{x_2}\times\cdots\times (A)_{x_n}.
$$
Thus $$\tilde\phi((a)_{x_i})=a_i$$ for $a\in A\smallsetminus\{0\}$ for $1\leq i\leq n$. Consider the epimorphism
$$
\tilde\phi\ast \tilde\phi\colon F^F[S^1]\ast F^F[S^1]\twoheadrightarrow K(A,1)\ast K(A,1).
$$
Observe that $F^F[S^1]\ast F^F[S^1]=F^{F\ast F}[S^1]$. Following
the notation in the introduction for the group $T_n$, let $B$ be a
copy of $A$ and so the group $F\ast F$ is the free group with a
basis given by $a$ for $a\in A\smallsetminus\{1\}$ and $b$ for
$b\in B\smallsetminus\{1\}$. The epimorphism
$$\tilde\phi\ast\tilde\phi\colon F^{F\ast F}[S^1]_n\to K(A,1)_n\ast K(A,1)_n=T_n$$ is given by
$$
(\tilde\phi\ast\tilde \phi)((a)_{x_i})=a\in A_i \textrm{ and } (\tilde\phi\ast\tilde \phi)((b)_{x_i})=b\in B_i
$$
for $1\leq i\leq n$. Let
\begin{align*}
& \tilde R_1=\langle (a)_{x_1}, (b)_{x_1} \ | \ a \in A \smallsetminus\{1\},b\in B\smallsetminus\{1\} \rangle^{F^{F\ast F}[S^1]_n}\\
& \tilde R_i=\langle (a)_{x_i}(a)_{x_{i-1}}^{-1},\ (b)_{x_i}(b)_{x_{i-1}}^{-1}\ |\  a \in A \smallsetminus\{1\},b\in B\smallsetminus\{1\}\rangle^{F^{F\ast F}[S^1]_n},\ 1<i\leq n\\
& \tilde R_{n+1}=\langle (a)_{x_n}, (b)_{x_n} \ | \ a\in
A\smallsetminus\{1\}, b\in B\smallsetminus\{1\}\rangle^{F^{F\ast
F}[S^1]_n}.
\end{align*}
Then
$$
\tilde\phi\ast\tilde\phi(\tilde R_i)=R_i
$$
for $1\leq i\leq n$ and so
\begin{equation}\label{equation3.5}
\tilde\phi\ast\tilde\phi([\tilde R_1,\tilde R_2,\ldots,
\tilde R_{n+1}]_S)=[R_1,R_2,\ldots,
R_{n+1}]_S.
\end{equation}
Let $H_{j}$ be a sequence of subgroups of $G$ for $1\leq j\leq k$. Recall that the fat commutator subgroup
$[[H_{1},\dots,H_{k}]]$ of $G$ is generated by all of
the commutators $\beta^t(h_{i_1}^{(1)},\dots,h_{i_t}^{(t)})$,
where
\begin{enumerate}
\item[1)] $1\leq i_s\leq k$;
\item[2)] all integers in $\{1,2,\cdots,k\}$ appear as at least one of the
integers $i_s$;
\item[3)] $h_j^{(s)}\in H_j$;
\item[4)] for each $t\geq k$,
 $\beta^t$ runs over all of the  bracket arrangements of weight $t$.
 \end{enumerate}
By~\cite[Proof of Theorem 1.8]{Wu2}, the Moore boundary
$$
\calB_nF^{F\ast F}[S^1]=[[\tilde R_1, \tilde R_2,\ldots, \tilde R_{n+1}]].
$$
According to~\cite[Theorem 1.1]{LW2},
$$
[[\tilde R_1, \tilde R_2,\ldots, \tilde R_{n+1}]]=[\tilde R_1, \tilde R_2,\ldots, \tilde R_{n+1}]_S
$$
It follows that
\begin{equation}\label{equation3.6}
\calB_nF^{F\ast F}[S^1]=[\tilde R_1, \tilde R_2,\ldots, \tilde R_{n+1}]_S.
\end{equation}
By~\cite[Lemma 5, 3.8]{Quillen}, any simplicial epimorphism induces an epimorphism on the Moore chains and so
$$
\begin{array}{rcl}
\calB_n (K(A,1)\ast K(A,1))&=&d_0N_{n+1}(K(A,1)\ast K(A,1))\\
&=&d_0(\tilde\phi\ast\tilde\phi(N_{n+1}(F^{F\ast}[S^1]))\\
&=&\tilde\phi\ast\tilde\phi(d_0N_{n+1}(F^{F\ast F}[S^1]))\\
&=&\tilde\phi\ast\tilde\phi(\calB_nF^{F\ast F}[S^1]).\\
\end{array}
$$
Equation~(\ref{equation3.4}) follows from
equations~(\ref{equation3.5}) and~(\ref{equation3.6}) now. This
finishes the proof. Observe that, for every pair $A,B$ of finite
abelian groups, the quotient group $A*B/\gamma_n([A*B,A*B])$ is
polycyclic, i.e. it is a solvable group such that every its
subgroup is finitely-generated. In particular, the groups
$\mathcal J_n(A)=T_n/\gamma_{2^{n+1}}([T_n,T_n])\mathcal B_n$ are
finitely presented for all $n\geq 2$.
\end{proof}
\vspace{.5cm}
\section{Homotopy Groups of $S^2$}\label{section6}
\vspace{.5cm} Taking the simplicial group $E_**E_*$ from Section
\ref{section2}, for $A=\mathbb Z$, we get the homotopy equivalence
$$
|E_**E_*|\simeq \Omega (K(\mathbb Z,2)\vee K(\mathbb Z,2))
$$
Since the Moore boundaries of $E_**E_*$ can be described
combinatorially using symmetric commutators, we obtain the
following description of homotopy groups of $S^2$ alternative to
the description given in \cite{Wu2}.
\begin{prop}
Let $n\geq 3$, $T_n:=(\underbrace{\mathbb Z\times \dots \times
\mathbb Z}_{n\ \text{copies}})*(\underbrace{\mathbb Z\times \dots
\times \mathbb Z}_{n\ \text{copies}}).$ There is an isomorphism
$$
\pi_{n+1}(S^2)\simeq Z(T_n/[R_1,\dots, R_{n+1}]_S),
$$
where the subgroups $R_i,\ i=1,\dots, n+1$ are defined as in
(\ref{d1})-(\ref{d3}).
\end{prop}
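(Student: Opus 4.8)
The plan is to specialize the proof of Theorem~\ref{theorem} to the group $A=\mathbb Z$, observing that the only role of the lower central truncation there was to secure polycyclicity, which is irrelevant here. First I would record that $\Sigma K(\mathbb Z,1)=\Sigma S^1=S^2$, so the homotopy group produced by the general machinery is literally $\pi_{n+1}(S^2)$. Every structural statement in Section~\ref{section2} is phrased for an arbitrary abelian group, so Lemma~\ref{element} gives $|E_*\ast E_*|\simeq\Omega(K(\mathbb Z,2)\vee K(\mathbb Z,2))$; combined with the isomorphism (\ref{wq}) this produces $\pi_n(T_*)\cong\pi_{n+1}(K(\mathbb Z,2)\vee K(\mathbb Z,2))\cong\pi_{n+1}(S^2)$ for $n\geq 2$, where $T_*=E_*\ast E_*$.

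Next I would invoke the center criterion \cite[Proposition 2.14]{Wu2}, applied to the untruncated simplicial group $T_*$ rather than to $T_*/\gamma_{2^{n+1}}([T_*,T_*])$. Its sole hypothesis is that the components have trivial center, and this holds for $T_n=(\mathbb Z^n)\ast(\mathbb Z^n)$ because a free product of two nontrivial groups has trivial center. Thus \cite[Proposition 2.14]{Wu2} yields $\pi_n(T_*)\simeq Z(T_n/\calB_n)$, where $\calB_n=\calB_n(T_*)$ is the Moore boundary. In Theorem~\ref{theorem} the truncation and the appeal to Curtis's Theorem~\ref{curtis} were needed only so that $\mathcal J_n(A)$ would be polycyclic for finite $A$; since the present statement asks nothing about finite presentation, the criterion may be used on $T_*$ directly and no truncation is introduced.

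The remaining step is the identification of $\calB_n$, and here I would reuse verbatim the computation leading to~(\ref{equation3.4}): the reductions through Carlsson's construction $F^F[S^1]$, \cite[Proof of Theorem 1.8]{Wu2}, \cite[Theorem 1.1]{LW2} and \cite[Lemma 5, 3.8]{Quillen} nowhere use finiteness of $A$, so for $A=\mathbb Z$ they give $\calB_n(T_*)=[R_1,R_2,\ldots,R_{n+1}]_S$ with the $R_i$ as in~(\ref{d1})--(\ref{d3}). Combining the three steps gives, for $n\geq 3$,
\[
\pi_{n+1}(S^2)\cong\pi_n(T_*)\simeq Z\bigl(T_n/\calB_n\bigr)=Z\bigl(T_n/[R_1,\ldots,R_{n+1}]_S\bigr).
\]

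The one genuinely new verification---and the only divergence from Theorem~\ref{theorem}---is that \cite[Proposition 2.14]{Wu2} may be applied to $T_*$ without first passing to a nilpotent quotient. I expect this to be the main, though mild, obstacle: one must confirm the trivial-center hypothesis for $T_n$ at every level $n$, and check that the combinatorial Moore-boundary description used in the proof of Theorem~\ref{theorem} is unaffected by the specialization $A=\mathbb Z$. Neither point is serious, so the argument is in essence a transcription of the proof of Theorem~\ref{theorem} with $A=\mathbb Z$ and the $\gamma$-factor removed.
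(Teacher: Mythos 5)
Your proposal is correct and matches the paper's (largely implicit) argument: the paper derives this proposition from the same simplicial model $E_*\ast E_*$ with $A=\mathbb Z$, the same identification $\calB_n=[R_1,\dots,R_{n+1}]_S$ via Carlsson's construction and \cite[Theorem 1.1]{LW2}, and the same center criterion \cite[Proposition 2.14]{Wu2}, applied without truncation since $T_n=\mathbb Z^n\ast\mathbb Z^n$ already has trivial center. Your observation that the $\gamma_{2^{n+1}}$-quotient (and hence Curtis's theorem) served only to obtain a polycyclic, finitely presented group for finite $A$ is exactly right --- indeed the paper notes that $T_n/[R_1,\dots,R_{n+1}]_S$ is not finitely presented.
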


Observe that, the groups $T_n/[R_1,\dots, R_{n+1}]_S$ are not
finitely presented as well as the groups from \cite{Wu2} whose
center is $\pi_{n+1}(S^2)$. There is an explicit construction of
finitely presented group whose center is $\pi_n(S^2)\times \mathbb
Z$ given in~\cite{LW1}. However there are some mistakes in the
paper~\cite{LW1}. In this section, we give a brief review for the
main result in~\cite{LW1} with corrections for the mistakes.

Let $d_i\colon P_n\to P_{n-1}$ be the group homomorphism given by
removing the $i$th strand of $n$-strand pure braids for $1\leq
i\leq n$. There exists a well-defined additional face operation
$d_0\colon P_n\to P_{n-1}$ as a group homomorphism defined by
$$
d_0A_{i,j}=A_{i-1,j-1}
$$
for $1\leq i<j\leq n$, where the braid $A_{0,j}\in P_{n-1}$ is given by
$$
\begin{array}{rcl}
A_{0,j}  &  =&(A_{j,j+1}A_{j,j+2}\cdots A_{j,n-1})^{-1}(A_{1,j}\cdots
A_{j-1,j})^{-1}\\
&  =&(\sigma_{j}\cdots\sigma_{n-3}\sigma_{n-2}^{2}\sigma
_{n-3}\cdots\sigma_{j})^{-1}\cdot(\sigma_{j-1}\cdots\sigma_{2}\sigma_{1}^{2}\sigma_{2}\cdots\sigma_{j-1})^{-1}.\\
\end{array}
$$
Then the sequence of groups $\mathbb{P}=\{P_n\}_{n\geq 0}$ forms a $\Delta$-group with the property that~\cite[Proposition 2.5]{LW1} the Moore homotopy group of the $\Delta$-group $\mathbb{P}$ given by
\begin{equation}\label{equation6.1}
\pi_n(\mathbb{P})\cong \pi_n(S^2).
\end{equation}
From the definition of the Moore homotopy groups of $\Delta$-groups, the Moore chains
\begin{equation}\label{equation6.2}
N_n\mathbb{P}=\bigcap_{i=1}^n \Ker(d_i\colon P_n\to P_{n-1})=\Brun_n,
\end{equation}
where $\Brun_n$ is the group of $n$-strand pure Brunnian braids. Let
$$
\mathrm{Bd}_n=d_0(\Brun_{n+1})=\calB_n\mathbb{P}
$$
be the Moore boundaries of $\mathbb{P}$, which is called
$n$-strand \textit{boundary Brunnian braids} in~\cite{LW1}. The
main result of~\cite{LW1} is as follows.

\begin{thm}~\cite[Theorem 1.1 (2)]{LW1}\label{theorem6.1}
The group $\mathrm{Bd}_n$ is a normal subgroup of the Artin braid group $B_n$. There are isomorphisms of groups
$$
Z(P_n/\mathrm{Bd}_n)\cong \pi_n(S^2)\times \Z\textrm{ and } Z(B_n/\mathrm{Bd}_n)\cong \{\alpha\in \pi_n(S^2)\ | \ 2\alpha=0\}\times\Z
$$
for $n\geq 4$, where $\Z$ is induced from
$Z(P_n)=Z(B_n)=\Z$.\hfill $\Box$
\end{thm}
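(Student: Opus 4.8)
The plan is to prove the three assertions in turn: normality of $\mathrm{Bd}_n$ in $B_n$, the computation of $Z(P_n/\mathrm{Bd}_n)$, and the computation of $Z(B_n/\mathrm{Bd}_n)$. Throughout I write $\theta_n\in P_n$ for the full twist, so that $Z(P_n)=Z(B_n)=\langle\theta_n\rangle\cong\Z$, and I set $Z_n=\Brun_n\cap\ker(d_0)$ for the Moore cycles, so that by (\ref{equation6.1}), (\ref{equation6.2}) and the definition of the Moore homotopy group one has $Z_n/\mathrm{Bd}_n=\pi_n(\mathbb{P})\cong\pi_n(S^2)$.

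For normality I would first record that $\Brun_{n+1}=\bigcap_{i=1}^{n+1}\ker(d_i)$ is normal in $B_{n+1}$: conjugation by a braid realizing a permutation $\tau$ carries $\ker(d_i)$ onto $\ker(d_{\tau(i)})$, so the intersection over all $i$ is preserved. Since $\mathrm{Bd}_n\subseteq P_n$, $P_n\triangleleft B_n$, and the Moore boundary is already normal in the component $P_n$ of the $\Delta$-group $\mathbb{P}$, it remains to check invariance of $\mathrm{Bd}_n=d_0(\Brun_{n+1})$ under conjugation by the standard generators $\sigma_i$. I would lift each $\sigma_i\in B_n$ to $B_{n+1}$ along an embedding compatible with the extra face $d_0$, use the intertwining of this lift with $d_0$, and transport the $B_{n+1}$-invariance of $\Brun_{n+1}$ through $d_0$. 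The technical point is verifying equivariance of $d_0$ (the twisted face $d_0A_{i,j}=A_{i-1,j-1}$, not an honest strand deletion) with respect to the chosen lift.

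For $Z(P_n/\mathrm{Bd}_n)$, set $Q=P_n/\mathrm{Bd}_n$. The center contains two visible pieces: the image of $Z_n$, namely $\pi_n(S^2)$, and the image of $Z(P_n)=\langle\theta_n\rangle$. Centrality of the first amounts to the inclusion $[P_n,Z_n]\subseteq\mathrm{Bd}_n$; this is the $\Delta$-group analogue of \cite[Proposition 2.14]{Wu2}, and I would prove it by exhibiting, for $z\in Z_n$ and a generator $g\in P_n$, an element of $\Brun_{n+1}$ whose $d_0$-image is $[g,z]$, using that $z$ is a cycle ($d_iz=1$ for all $i$). Centrality of $\theta_n$ is immediate. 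The two pieces meet trivially because $\theta_n$ is not Brunnian: $d_i(\theta_n^k)=\theta_{n-1}^k\neq 1$ for $k\neq 0$, so $\langle\theta_n\rangle\cap\Brun_n=1$ and a fortiori $\langle\theta_n\rangle\cap Z_n=1$. For the reverse inclusion, take $x$ with $x\,\mathrm{Bd}_n\in Z(Q)$, so $[x,P_n]\subseteq\mathrm{Bd}_n\subseteq\Brun_n\cap\ker(d_0)$. Applying each homomorphism $d_i$ and using surjectivity of $d_i\colon P_n\to P_{n-1}$ gives $d_ix\in Z(P_{n-1})=\langle\theta_{n-1}\rangle$ for every $i$; the simplicial identities $d_id_j=d_{j-1}d_i$, together with infinite order of $\theta_{n-2}$, then force $d_ix=\theta_{n-1}^{k}$ with one common $k$ for $1\le i\le n$. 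Hence $x\theta_n^{-k}\in\Brun_n$, and the analogous identity for $d_0$ places it in $Z_n$, identifying $Z(Q)=(Z_n\cdot\langle\theta_n\rangle)/\mathrm{Bd}_n\cong\pi_n(S^2)\times\Z$. I expect the bookkeeping that exactly one copy of $\Z$ survives — that the $n+1$ a priori independent central faces $d_ix$ collapse to a single power of the full twist — to be the main obstacle.

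For $Z(B_n/\mathrm{Bd}_n)$, note that $B_n/P_n\cong\Sigma_n$ acts on $Q$, and an element is central in $B_n/\mathrm{Bd}_n$ iff it lies in $Z(Q)$ and is fixed by conjugation by every $\sigma_i$. Since $\theta_n$ is already central in $B_n$, the factor $\Z$ survives unchanged. On the $\pi_n(S^2)$ factor the key computation is that conjugation by a standard generator $\sigma_i$ acts as multiplication by $-1$ on $\pi_n(\mathbb{P})=Z_n/\mathrm{Bd}_n$; granting this, an element $\alpha$ of this abelian, central factor is $\sigma_i$-invariant iff $\alpha=-\alpha$, that is $2\alpha=0$. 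Thus the homotopy summand is cut down to its $2$-torsion and
$$
Z(B_n/\mathrm{Bd}_n)\cong\{\alpha\in\pi_n(S^2)\mid 2\alpha=0\}\times\Z.
$$
The crux is this sign: it reflects the antipodal action of a transposition on the spherical Brunnian classes, and — alongside the one-copy-of-$\Z$ bookkeeping above — is precisely where the argument must be carried out with care and where the corrections to \cite{LW1} enter.
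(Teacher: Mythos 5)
You should first note a structural mismatch: the paper does not prove this theorem at all. It is quoted verbatim from \cite[Theorem 1.1 (2)]{LW1} (hence the terminal $\Box$ with no proof environment), and the actual contribution of this section is the subsequent theorem giving corrected \emph{finite} sets of normal generators for $\Brun_n$ and $\mathrm{Bd}_n$, repairing the false \cite[Lemma 3.6]{LW1} and thereby salvaging the finite-presentation claims. So there is no in-paper argument to match yours against; the comparison has to be with the architecture of \cite{LW1}, of which your sketch is a partial reconstruction. To your credit, the routine bookkeeping in your sketch is sound and does go through: $\mathrm{Bd}_n\subseteq \Brun_n\cap\ker(d_0)$ follows from the $\Delta$-identity $d_0d_1=d_0d_0$; all faces, including the twisted $d_0$, are surjective (note $A_{s,t}=d_0A_{s+1,t+1}$); the identities $d_id_j=d_{j-1}d_i$ do force a single common exponent $k$ with $d_ix=\theta_{n-1}^k$; and an abelianization count gives $d_0\theta_{n-1}=\theta_{n-2}^{-1}$, which kills the last free parameter and yields the containment $Z(P_n/\mathrm{Bd}_n)\subseteq (Z_n\langle\theta_n\rangle)/\mathrm{Bd}_n$ together with the trivial intersection $\langle\theta_n\rangle\cap Z_n=1$. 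That is the easy half.

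The genuine gaps are the three steps you yourself flag as ``the crux'' and then do not carry out, and at least the first would fail as you propose to do it. (i) The inclusion $[P_n,Z_n]\subseteq\mathrm{Bd}_n$ is \emph{not} a routine ``$\Delta$-group analogue of \cite[Proposition 2.14]{Wu2}'': the simplicial proof of that proposition builds the bounding element in $N_{n+1}$ out of degeneracies (elements such as $[s_0g,\,s_0z(s_1z)^{-1}]$), and the $\Delta$-group $\mathbb{P}$ has no degeneracy operations compatible with the twisted face $d_0A_{i,j}=A_{i-1,j-1}$ — strand doubling does not intertwine with it. In \cite{LW1} this centrality is obtained by relating $\mathbb{P}$ to honest simplicial groups (Milnor's $F[S^1]$ realized inside the pure braid groups), and that transport is the real content of the theorem; your plan to ``exhibit an element of $\Brun_{n+1}$ whose $d_0$-image is $[g,z]$'' has no mechanism for producing such an element. (ii) The assertion that conjugation by $\sigma_i$ acts as $-1$ on $\Brun_n/\mathrm{Bd}_n\cong\pi_n(S^2)$ is precisely the ``$2\alpha=0$'' content of the statement for $B_n/\mathrm{Bd}_n$; you grant it rather than prove it, so the second isomorphism is assumed, not derived (the reduction to $\Sigma_n$-invariance of elements of $Z(P_n/\mathrm{Bd}_n)$ is fine). (iii) Normality of $\mathrm{Bd}_n$ in $B_n$ is reduced to an unverified $d_0$-equivariance of a lift of $\sigma_i$ to $B_{n+1}$; since $d_0$ is an algebraically defined, not geometric, face, this is exactly the computation that must be done on the generators $A_{0,j}$ and is absent. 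As it stands the proposal proves the upper bound on the center and the direct-product bookkeeping, while the three statements that make the theorem true are placeholders.
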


The main results~\cite[Theorem 1, Theorem 3]{LW1} are correct.
Also it is correct that the groups $P_n/\mathrm{Bd}_n$ and
$B_n/\mathrm{Bd}_n$ are finitely presented. The major mistake
in~\cite{LW1} is that~\cite[Lemma 3.6]{LW1} is not correct. As a
consequence of this false lemma, the statements~\cite[Theorem 3.7,
Lemma 3.11, Corollary 3.12, Corollary 3.13, Corollary 3.14,
Proposition 3.15]{LW1} are not true. For the introduction to the
main results in~\cite{LW1}, the set of normal generators for
$\Brun_n$ in~\cite[line 3, p.522]{LW1} and for $\mathrm{Bd}_n$
in~\cite[line 14, p.523]{LW1} are not correct. In order to correct
the mistakes, we determine a finite set of normal generators for
$\Brun_n$ as well as a finite set of normal generators for
$\mathrm{Bd}_n$. This will confirm that $P_n/\mathrm{Bd}_n$ and
$B_n/\mathrm{Bd}_n$ are finitely presented.

\begin{thm}
Let $\Brun_n$ be the group of $n$-strand Brunnian pure braids and let $\mathrm{Bd}_n$ be the group of $n$-strand boundary Brunnian pure braids. Then
\begin{enumerate}
\item[1)] The group $\Brun_n$ is the normal closure of the following elements in $P_n$:
$$
[[[A_{1,n}, A_{\sigma(2),j_2}], A_{\sigma(3),j_3}],\ldots,A_{\sigma(n-1),j_{n-1}}]
$$
for $\sigma\in\Sigma_{n-2}$ acting on $\{2,3,\ldots,n-1\}$ and $1\leq j_2,\ldots,j_{n-1}\leq n$ with $j_s\not=\sigma(s)$ for $2\leq s\leq n-1$, where $A_{j,i}=A_{i,j}$ if $i>j$.
\item[2)] The group $\mathrm{Bd}_n$ is the normal closure of the following elements in $P_n$:
$$
[[[A_{0,n}, A_{\sigma(1),j_1}], A_{\sigma(2),j_2}],\ldots,A_{\sigma(n-1),j_{n-1}}]
$$
for $\sigma\in\Sigma_{n-1}$ and $1\leq j_1,\ldots,j_{n-1}\leq n$ with $j_s\not=\sigma(s)$ for $2\leq s\leq n-1$, where $A_{j,i}=A_{i,j}$ if $i>j$.
\end{enumerate}
\end{thm}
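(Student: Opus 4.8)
My plan is to prove both parts by separating a routine membership inclusion from the substantial normal-generation inclusion. For part~(1) membership is immediate, since a left-normed commutator is sent to $1$ by a homomorphism as soon as one of its entries is. The face $d_k\colon P_n\to P_{n-1}$ sends $A_{i,j}$ to $1$ exactly when $k\in\{i,j\}$ and to a relabelled generator otherwise. Thus for $k=1$ or $k=n$ the innermost entry $A_{1,n}$ is killed, while for $2\le k\le n-1$, since $\sigma$ runs onto $\{2,\ldots,n-1\}$, some index $s$ has $\sigma(s)=k$ and the entry $A_{\sigma(s),j_s}$ is killed; hence every listed commutator lies in $\bigcap_{i=1}^n\ker d_i=\Brun_n$.

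For the generation inclusion I would pass to the free kernel of the Fadell--Neuwirth fibration. Forgetting the last strand gives $d_n\colon P_n\to P_{n-1}$ with free kernel $F=\langle A_{1,n},\ldots,A_{n-1,n}\rangle$ of rank $n-1$. For $1\le i\le n-1$ the restriction $d_i|_F$ sends $A_{i,n}$ to $1$ and the remaining generators bijectively onto a basis of $\ker(d_{n-1}\colon P_{n-1}\to P_{n-2})$, so $\ker(d_i|_F)=\langle A_{i,n}\rangle^F$; intersecting and using $\Brun_n\subseteq\ker d_n=F$ yields
$$
\Brun_n=\bigcap_{i=1}^{n-1}\langle A_{i,n}\rangle^F .
$$
This last group is the Brunnian subgroup of the free group $F$ relative to its basis, and by the free-group form of the relation between such intersections and commutators (cf.~\cite{Wu2} and~\cite[Theorem 1.1]{LW2}) it equals the symmetric commutator subgroup $[\langle A_{1,n}\rangle^F,\ldots,\langle A_{n-1,n}\rangle^F]_S$, hence is normally generated by the left-normed commutators $[[[A_{\tau(1),n},A_{\tau(2),n}],\ldots],A_{\tau(n-1),n}]$ over all $\tau\in\Sigma_{n-1}$.

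It remains to normalize the innermost entry. Using that $[x^{-1},y]$ is conjugate to $[x,y]^{-1}$ together with the Hall--Witt identity, each of these commutators is congruent, modulo the normal closure of those whose innermost entry is $A_{1,n}$, to a product of the latter; so the commutators $[[[A_{1,n},A_{\sigma(2),n}],\ldots],A_{\sigma(n-1),n}]$ with $\sigma\in\Sigma_{n-2}$ on $\{2,\ldots,n-1\}$ already normally generate $\Brun_n$. These are the listed generators with every $j_s=n$, which with the membership step proves~(1). For~(2) I would push~(1) forward along the extra face $d_0\colon P_{n+1}\to P_n$, a surjective homomorphism with $d_0A_{i,j}=A_{i-1,j-1}$; surjectivity turns $\mathrm{Bd}_n=d_0(\Brun_{n+1})$ into the normal closure of the $d_0$-images of the part-(1) generators of $\Brun_{n+1}$. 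Since $A_{1,n+1}\mapsto A_{0,n}$ and $A_{\sigma(s),j_s}\mapsto A_{\sigma(s)-1,j_s-1}$, these images are, after reindexing, the stated commutators $[[[A_{0,n},A_{\sigma(1),j_1}],\ldots],A_{\sigma(n-1),j_{n-1}}]$, except that an image in which an outer second index becomes $0$ acquires a spurious factor $A_{0,t}$; substituting the explicit word for $A_{0,t}$ displayed before the theorem and expanding by commutator identities rewrites such an image inside the normal closure of the listed generators.

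The step I expect to be the main obstacle is pinning down these index ranges exactly, which is precisely where~\cite[Lemma 3.6]{LW1} fails. The two delicate points are the Hall--Witt normalization that fixes the innermost entry ($A_{1,n}$ for $\Brun_n$, $A_{0,n}$ for $\mathrm{Bd}_n$) without enlarging the normal closure, and, for $\mathrm{Bd}_n$, the systematic elimination of the spurious factors $A_{0,t}$ coming from the explicit formula for $A_{0,j}$; checking that the resulting finite family is simultaneously contained in and generates the target group is the heart of the correction.
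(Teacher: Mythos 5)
Your two membership checks are fine, and your identification $\Brun_n=\bigcap_{i=1}^{n-1}\langle A_{i,n}\rangle^{F}=[\langle A_{1,n}\rangle^F,\ldots,\langle A_{n-1,n}\rangle^F]_S$ inside the free kernel $F=\ker d_n$ is a legitimate substitute for the paper's citation of \cite[Theorem 1.1]{BMVW}. But the generation half of part (1) then contains a fatal gap, and it is exactly the error this section of the paper exists to correct. You assert that the symmetric commutator subgroup is normally generated by the single left-normed commutators $[[A_{\tau(1),n},A_{\tau(2),n}],\ldots,A_{\tau(n-1),n}]$, i.e.\ that the theorem's generators with every $j_s=n$ already suffice. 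The only available tool for reading off normal generators of $[A,B]$ (Statement 4.4, from the proof of \cite[Lemma 5.2]{BMVW}) requires a set of \emph{group} generators for the second factor, and the normal closures $\langle A_{i,n}\rangle^F$ are infinitely generated free groups, given only by one normal generator --- which is not enough. The free-group analogue of your claim is outright false in rank $\geq 3$: in $F(x,y,z)$ one has $[x,y^z]\in[\langle x\rangle^F,\langle y\rangle^F]$ but $[x,y^z]\notin\langle[x,y]\rangle^F$, since the quotient by $\langle[x,y]\rangle^F$ is $\mathbb{Z}^2\ast\mathbb{Z}$, in which $x$ and $y^z$ do not commute (in rank $2$ the claim happens to hold, which makes the error seductive). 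Your reduction is precisely a form of the retracted \cite[Lemma 3.6]{LW1}, whose failure is the reason the corrected theorem lists commutators with the extra indices $1\leq j_s\leq n$ at all; the Hall--Witt manipulation you invoke only permutes the innermost entry into first position (and even that, at the subgroup level, is the nontrivial \cite[Theorem 1.2]{LW2}, not a two-line identity) --- it does nothing to repair the generation step.

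The paper's proof avoids the trap by enlarging rather than shrinking: after reducing to $\Brun_n=\prod_{\sigma\in\Sigma_{n-2}}[[R_{1,n},R_{\sigma(2),n}],\ldots,R_{\sigma(n-1),n}]$ via \cite{BMVW} and \cite{LW2}, it replaces each outer factor $R_{\sigma(s),n}$ by the larger normal subgroup $G_{\sigma(s)}=\ker(d_{\sigma(s)})$, which is free of finite rank $n-1$ with the explicit generating set $\{A_{\sigma(s),j}\ |\ j\neq\sigma(s)\}$, checks that the enlarged fat commutator still lies in $\Brun_n$ (each face $d_i$ kills one factor), and only then applies Statement 4.4 --- producing exactly the listed generators with all values of $j_s$. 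Your part (2) inherits the same gap from part (1), and your pushforward along $d_0$ additionally manufactures outer entries $A_{0,t}$ (whenever $j_s=1$) that you propose to eliminate by substituting the explicit word for $A_{0,j}$; this is unnecessary and unproven as sketched. The paper instead applies $d_0$ only at the level of the normal closures, obtaining $\mathrm{Bd}_n=\prod_{\sigma}[[R_{0,n},R_{\sigma(2)-1,n}],\ldots,R_{\sigma(n)-1,n}]$, and then reruns the enlargement argument directly in $P_n$ with $R_{0,n}$ and the subgroups $G_{\sigma(t)-1}$, so that no index $0$ ever appears in an outer entry, while membership of the listed elements in $\mathrm{Bd}_n$ is checked by exhibiting explicit $d_0$-preimages in $\Brun_{n+1}$.
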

\begin{proof}
(1). Let $R_{i,j}$ be the normal closure of $A_{i,j}$ in $P_n$ for $1\leq i<j\leq n$. From~\cite[Theorem 1.1]{BMVW},
$$
\Brun_n=\prod_{\sigma\in\Sigma_{n-1}}[[R_{\sigma(1),n},R_{\sigma(2),n}],\ldots,R_{\sigma(n-1),n}].
$$
By~\cite[Theorem 1.2]{LW2},
$$
\prod_{\sigma\in\Sigma_{n-1}}[[R_{\sigma(1),n},R_{\sigma(2),n}],\ldots,R_{\sigma(n-1),n}]=\prod_{\sigma\in\Sigma_{n-2}}[[R_{1,n},R_{\sigma(2),n}],\ldots,R_{\sigma(n-1),n}].
$$
Thus
\begin{equation}\label{equation6.3}
\Brun_n=\prod_{\sigma\in\Sigma_{n-2}}[[R_{1,n},R_{\sigma(2),n}],\ldots,R_{\sigma(n-1),n}].
\end{equation}
For each $1\leq i\leq n$, let $G_i$ be the subgroup of $P_n$ generated by $A_{i,j}$ for $1\leq j\leq n$ with $j\not=i$, where $A_{i,j}=A_{j,i}$ if $i>j$. Recall that $P_n=\pi_1(F(\C,n))$, where
\begin{equation}\label{equation6.4}
F(\C,n)=\{(z_1,\ldots,z_n)\ | \ z_i\not=z_j \textrm{ for } i\not=j\}
\end{equation}
is the ordered configuration space. By~\cite[Proposition
3.2]{BMVW}, the removal of the $i$th strand operation $d_i\colon
P_n\to P_{n-1}$ is induced by the coordinate projection
$$
\pi_i\colon F(\R^2,n)\longrightarrow F(\R^2,n-1)\quad (z_1,z_2,\ldots,z_n)\mapsto (z_1,\ldots,z_{i-1},z_{i+1},\ldots,z_n).
$$
Let $(p_1,\ldots,p_n)$ be a base-point of $F(\C,n)$. Namely
$p_1,\ldots,p_n$ are $n$ distinct points in the plane $\R^2$. By
the classical Fadell-Neuwirth Theorem~\cite{FN}, the coordinate
projection $\pi_i$ is a fibration with a punctured plane
$\R^2\smallsetminus \{p_1,\ldots, p_{i-1},p_{i+1},\ldots,p_n\}$ as
a fibre. By taking the fundamental groups to the
fibration~(\ref{equation6.4}), we obtain $$\Ker(d_i\colon P_n\to
P_{n-1})=G_i=F_{n-1}.$$
Moreover for $1\leq i<j\leq n$
$$
G_i\cap G_j=\Ker(d_i\colon P_n\to P_{n-1})\cap \Ker(d_j\colon P_n\to P_{n-1})=R_{i,j}
$$
since the restriction of $d_i$ to $G_j$ is given by the projection
map
$$
F(A_{1,j},\ldots, A_{j-1,j},A_{j,j+1},\ldots, A_{j,n})\longrightarrow F(A_{1,j-1},\ldots,A_{j-1,n-1})\leq P_{n-1}
$$
with $d_iA_{i,j}=1$, $d_iA_{s,j}=A_{s,j-1}$ for $s<i$.
$d_iA_{s,j}=A_{s-1,j-1}$ for $i<s<j$ and $d_iA_{j,s}=A_{j-1,s-1}$
for $s>j$. Consider the factors in product~(\ref{equation6.3}).
For each $\sigma\in \Sigma_{n-2}$,
$$
[[R_{1,n},R_{\sigma(2),n}],\ldots,R_{\sigma(n-1),n}]\leq [[R_{1,n},G_{\sigma(2)}],\ldots,G_{\sigma(n-1)}].
$$
Observe that
$$
[[R_{1,n},G_{\sigma(2)}],\ldots,G_{\sigma(n-1)}]\leq\Brun_n.
$$
We can construct a finite set of normal generators in $P_n$ for $[[R_{1,n},G_{\sigma(2)}],\ldots,G_{\sigma(n-1)}]$ based on the following statement:

\bigskip

\noindent\textbf{Statement 4.4}~\cite[Proof of Lemma 5.2]{BMVW}.
Let $G$ be a group and let $A$ and $B$ be normal subgroups of $G$.
If $\{a_i\ | \ i\in I\}$ is a set of normal generators for $A$ in
$G$ and $\{b_j\ | \ j\in J\}$ is a set of generators for $B$, then
$\{[a_i,b_j]\ | \ i\in I, j\in J\}$ is a set of normal generators
for the commutator subgroup $[A,B]$ in $G$.

\bigskip

The construction of a set of normal generators for
$[[R_{1,n},G_{\sigma(2)}],\ldots,G_{\sigma(n-1)}]$ is as follows.
Observe that $R_{1,n}$ has a normal generator $A_{1,n}$ and
$G_{\sigma(2)}$ has generators $A_{\sigma(2),j_2}$ for $1\leq
j_2\leq n$ with $j_2\not=\sigma(2)$. From the above statement, a
set of normal generators for $[R_{1,n}, G_{\sigma(2)}]$ is given
by
$$
[A_{1,n},A_{\sigma(2), j_2}]
$$
for $1\leq j_2\leq n$ with $j_2\not=\sigma(2)$. By repeating this procedure, a set of normal generators for $[[R_{1,n},G_{\sigma(2)}],\ldots,G_{\sigma(n-1)}]$ is given by
$$
[[[A_{1,n}, A_{\sigma(2),j_2}], A_{\sigma(3),j_3}],\ldots,A_{\sigma(n-1),j_{n-1}}]
$$
for $1\leq j_s\leq n$ with $j_s\not=\sigma(s)$. This finishes the proof of assertion (1).

(2). By definition, $\mathrm{Bd}_n=d_0(\Brun_{n+1})$. Since
$$
d_0([[A_{1,n+1}, A_{\sigma(1)+1,j_1+1}],\ldots,A_{\sigma(n-1)+1,j_{n-1}+1}])=[[A_{0,n}, A_{\sigma(1),j_1}],\ldots,A_{\sigma(n-1),j_{n-1}}]
$$
with $[[A_{1,n+1}, A_{\sigma(1)+1,j_1+1}],\ldots,A_{\sigma(n-1)+1,j_{n-1}+1}]\in \Brun_{n+1}$, the elements listed in assertion (2) lie in $\mathrm{Bd}_n$. Now from equation~\ref{equation6.3}, we have
\begin{equation}\label{equation6.5}
\begin{array}{rcl}
\mathrm{Bd}_n&=&\prod\limits_{\sigma\in\Sigma_{n-1}}[[d_0(R_{1,n+1}),d_0(R_{\sigma(2),n+1})],\ldots,d_0(R_{\sigma(n),n+1})]\\
&=&\prod\limits_{\sigma\in\Sigma_{n-1}}[[R_{0,n},R_{\sigma(2)-1,n}],\ldots,R_{\sigma(n)-1,n}],\\
\end{array}
\end{equation}
where $R_{0,j}$ is the normal closure of $A_{0,j}$ in $P_n$. Given any $\sigma\in \Sigma_{n-1}$ acting on $\{2,\ldots,n\}$, the factor
$$
[[R_{0,n},R_{\sigma(2)-1,n}],\ldots,R_{\sigma(n)-1,n}]\leq
[[R_{0,n},G_{\sigma(2)-1}],\ldots,G_{\sigma(n)-1}]
$$
with $1\leq \sigma(t)-1\leq n-1$ for $2\leq t\leq n$. From
Statement 4.4, a set of normal generators for
$[[R_{0,n},G_{\sigma(2)-1}],\ldots,G_{\sigma(n)-1}]$ is given by
$$
[[[A_{0,n},A_{\sigma(2)-1, j_2}],A_{\sigma(3)-1,j_3}],\ldots, A_{\sigma(n)-1, j_{n}}]
$$
with $1\leq j_2,j_3,\ldots,j_{n}\leq n$ and $j_s\not=\sigma(s)-1$ for $2\leq s\leq n$. Assertion (2) follows.
\end{proof}

\end{document}